\newcommand{\half}{\frac{1}{2}}
\renewcommand{\[}{\begin{equation}}
\renewcommand{\]}{\end{equation}}
\newtheorem{proposition}{\sc Proposition}[section]
\newtheorem{theorem}[proposition]{\sc Theorem}
\newtheorem{conjecture}[proposition]{\sc Conjecture}
\newtheorem{definition}[proposition]{\sc Definition}
\theoremstyle{definition}
\theoremstyle{remark}
\newcommand{\id}{\operatorname{id}}
\newcommand{\ra}{\rightarrow}
\newcommand{\ot}{\otimes}
\renewcommand{\phi}{\varphi}
\renewcommand{\epsilon}{\varepsilon}
\newcommand{\cO}{\mathcal{O}}
\def\C{{\mathbb C}}
\def\z2{{\mathbb Z}/2{\mathbb Z}}
\def\id{{\rm id}}
\def\pr{{\rm pr}}
\def\half{{\mbox{$\frac{1}{2}$}}}
\newcommand{\lra}{\longrightarrow}
\begin{document}
\baselineskip=16pt

\author{Paul F. Baum}
\address{Mathematics Department, McAllister Building,  
The Pennsylvania State University,
University Park, PA  16802, USA\\
Instytut Matematyczny, Polska Akademia Nauk, ul.~\'Sniadeckich 8, Warszawa, 00-656 Poland}
\email{baum@math.psu.edu}

\author{Ludwik D\k abrowski}
\address{SISSA (Scuola Internazionale Superiore di Studi Avanzati)\\
Via Bonomea 265, 34136 Trieste, Italy
}
\email{dabrow@sissa.it}

\author{Piotr M.~Hajac}
\address{Institytut Matematyczny, Polska Akademia Nauk\\
ul.\ \'Sniadeckich 8, 00-656 Warszawa, Poland\\
}
\email{pmh@impan.pl}

\title[Noncommutative Borsuk-Ulam-type conjectures]{\large Noncommutative Borsuk-Ulam-type conjectures}
\begin{abstract}
Within the framework of free actions of compact quantum groups on unital\linebreak \mbox{C*-algebras}, we propose two conjectures. 
The first one states
that, if $H$ is the C*-algebra of a compact quantum group coacting freely on a unital C*-algebra $A$,  
then there is no equivariant $*$-homomorphism from $A$ to the join C*-algebra $A*H$. 
For $A$ being the C*-algebra
of continuous functions on a sphere with the antipodal coaction of 
the C*-algebra of funtions on $\mathbb{Z}/2\mathbb{Z}$,
we recover the celebrated  Borsuk-Ulam theorem. 
The second conjecture states that there is no equivariant $*$-homomorphism 
from $H$ to the join C*-algebra $A*H$.  
We show how to prove the conjecture in the special case $A=C(SU_q(2))=H$, which
is tantamount to showing the non-trivializability of Pflaum's quantum instanton fibration built 
from $SU_q(2)$.
\end{abstract}
\maketitle

\section{Introduction}
\noindent
The Borsuk-Ulam theorem is a fundamental theorem of topology with an enormous amount
of corollaries and generalizations~\cite{mj03}. Having a noncommutative generalization would provide
further evidence for its fundamental nature.

\begin{theorem}[\cite{b-k33}]
Let $n$ be a positive natural number.
If $f\colon S^n\to\mathbb{R}^n$ is continuous, then there exists a pair $(p,-p)$ of antipodal points on
$S^n$ such that $f(p)=f(-p)$.
\end{theorem}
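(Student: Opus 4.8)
The plan is to use the classical reduction: argue by contradiction and reduce the assertion to the purely topological statement that \emph{there is no continuous map $g\colon S^n\to S^{n-1}$ intertwining the antipodal involutions}, i.e.\ with $g(-p)=-g(p)$ for all $p$. Indeed, if $f\colon S^n\to\mathbb{R}^n$ is continuous and $f(p)\neq f(-p)$ for every $p\in S^n$, then
\begin{equation*}
g(p):=\frac{f(p)-f(-p)}{\|f(p)-f(-p)\|}
\end{equation*}
is well defined, continuous, takes values in $S^{n-1}$, and manifestly satisfies $g(-p)=-g(p)$; so it suffices to show that no such $g$ can exist.

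So suppose $g$ as above exists. For $n=1$ this is already impossible: an antipodal $g$ takes both values of $S^0=\{\pm 1\}$, whereas the continuous image of the connected space $S^1$ is connected. For $n\ge 2$ I would pass to the antipodal quotients. Being equivariant, $g$ descends to a continuous map $\bar g\colon\mathbb{R}P^n\to\mathbb{R}P^{n-1}$ fitting into a commutative square with the two double covers $\pi_n\colon S^n\to\mathbb{R}P^n$ and $\pi_{n-1}\colon S^{n-1}\to\mathbb{R}P^{n-1}$. The pullback of $\pi_{n-1}$ along $\bar g$ is a double cover of $\mathbb{R}P^n$ that receives a fibrewise bijection from $\pi_n$ (namely $p\mapsto(\pi_n(p),g(p))$), hence is isomorphic to $\pi_n$, in particular connected and nontrivial. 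Since double covers of a space $Y$ are classified by $H^1(Y;\mathbb{Z}/2\mathbb{Z})$, this says $\bar g^{*}(w_{n-1})=w_n$, where $w_m$ denotes the nonzero class of $H^1(\mathbb{R}P^m;\mathbb{Z}/2\mathbb{Z})$. Invoking now $H^{*}(\mathbb{R}P^m;\mathbb{Z}/2\mathbb{Z})\cong(\mathbb{Z}/2\mathbb{Z})[w_m]/(w_m^{\,m+1})$ together with the ring-homomorphism property of $\bar g^{*}$, one obtains
\begin{equation*}
w_n^{\,n}=(\bar g^{*}w_{n-1})^{n}=\bar g^{*}(w_{n-1}^{\,n})=0\quad\text{in }H^n(\mathbb{R}P^n;\mathbb{Z}/2\mathbb{Z}),
\end{equation*}
the last equality because $w_{n-1}^{\,n}$ lies in $H^n(\mathbb{R}P^{n-1};\mathbb{Z}/2\mathbb{Z})=0$. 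This contradicts $w_n^{\,n}\neq 0$, and unwinding the reduction proves the theorem. (An equivalent route: post-compose $g$ with the equatorial inclusion $S^{n-1}\hookrightarrow S^n$ to get an antipodal self-map of $S^n$; by Borsuk's theorem its degree is odd, hence nonzero, so the map is onto---absurd, since it misses a pole. One may also replace the cohomology by an induction on $n$.)

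The one genuinely substantial step is the input of the topological invariant, namely that $w_n^{\,n}\neq 0$ in $H^n(\mathbb{R}P^n;\mathbb{Z}/2\mathbb{Z})$ (equivalently, that antipodal self-maps of $S^n$ have odd degree); everything around it is formal bookkeeping. This is precisely the phenomenon the conjectures below are designed to generalize: dually, the non-existence of an antipodal map $S^n\to S^{n-1}$ asserts that there is no $\mathbb{Z}/2\mathbb{Z}$-equivariant unital $*$-homomorphism from $C(S^{n-1})$ to $C(S^{n-1})*C(\mathbb{Z}/2\mathbb{Z})\cong C(S^n)$---the instance of the first conjecture announced in the abstract.
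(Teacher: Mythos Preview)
Your argument is correct. The reduction step---defining $g(p)=\dfrac{f(p)-f(-p)}{\|f(p)-f(-p)\|}$ to pass from the analytic statement to the nonexistence of a $\mathbb{Z}/2\mathbb{Z}$-equivariant continuous map $S^n\to S^{n-1}$---is exactly the manoeuvre the paper carries out in the paragraph between the two theorem statements. The cohomological step (pulling back the generator of $H^1(\mathbb{R}P^{n-1};\mathbb{Z}/2\mathbb{Z})$ and using $w_{n-1}^{\,n}=0$ versus $w_n^{\,n}\neq 0$) is a standard and valid route, and your $n=1$ connectedness remark handles the base case cleanly.

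The only point of comparison is that the paper does \emph{not} actually prove this theorem: it is stated with a citation to Borsuk's 1933 paper and used as motivational background. The paper's contribution at this stage is solely to record the equivalence with the equivariant formulation (your reduction step), which it then recasts in join language to set up the noncommutative conjectures. So you have supplied more than the paper does here---a full proof rather than a citation---and your closing remark tying the statement back to the $A=C(S^{n-1})$, $H=C(\mathbb{Z}/2\mathbb{Z})$ instance of the type~1 conjecture is on target.
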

\noindent
Assuming that both temperature and pressure are continuous functions, we can conclude that there are always 
two antipodal points on Earth with exactly the same pressure and temperature.

The logical negation of the theorem yields:
There exists a continuous map $f\colon S^n\to\mathbb{R}^n$ such that for all pairs 
$(p,-p)$ of antipodal points on
$S^n$ we have $f(p)\neq f(-p)$.
For the antipodal action of $\mathbb{Z}/2\mathbb{Z}$ on $S^n$ and $\mathbb{R}^n$, the latter
statement is equivalent to:
There  exists a $\mathbb{Z}/2\mathbb{Z}$-equivariant 
continuous  map $\widetilde{f}\colon S^n\to S^{n-1}$. 

Indeed, if $f\colon S^n\to\mathbb{R}^n$ is a continuous map with $f(p)\neq f(-p)$ , 
then the formula
\[
\widetilde{f}(p):=\frac{f(p)-f(-p)}{\|f(p)-f(-p)\|}
\]
defines a continuous $\mathbb{Z}/2\mathbb{Z}$-equivariant map from $S^n$ to $S^{n-1}$.
Also, composing any such a map with the inclusion map $S^{n-1}\subset\mathbb{R}^n$ gives
a nowhere vanishing continuous map $f\colon S^n\to\mathbb{R}^n$ with 
$f(-p)=-f(p)\neq f(p)$.
Consequently, the Borsuk-Ulam Theorem is equivalent to:
\begin{theorem}[equivariant formulation]\label{classicalequi}
Let $n$ be a positive natural number.
There  does \emph{not} exist a $\mathbb{Z}/2\mathbb{Z}$-equivariant 
continuous  map $\widetilde{f}\colon S^n\to S^{n-1}$. 
\end{theorem}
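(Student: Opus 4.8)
The plan is to assume the existence of such a map and derive a contradiction from a cup-length computation in mod-$2$ cohomology. First I would dispose of the case $n=1$ by hand: $S^0=\{-1,+1\}$ is disconnected whereas $S^1$ is connected, so any continuous map $\widetilde f\colon S^1\to S^0$ must be constant and hence cannot satisfy the equivariance condition $\widetilde f(-p)=-\widetilde f(p)$.

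Assume now $n\geq 2$ and suppose, for contradiction, that there is a $\z2$-equivariant continuous map $\widetilde f\colon S^n\to S^{n-1}$, where $\z2$ acts antipodally on both spheres. Since these antipodal actions are free, taking orbit spaces produces a continuous map $\overline f\colon\mathbb{RP}^n\to\mathbb{RP}^{n-1}$ together with a commuting square relating $\overline f$ to the two double-covering projections $\pi_n\colon S^n\to\mathbb{RP}^n$ and $\pi_{n-1}\colon S^{n-1}\to\mathbb{RP}^{n-1}$. The equivariance of $\widetilde f$ is precisely the condition ensuring that this square is cartesian, i.e. that $\pi_n$ is isomorphic, as a principal $\z2$-bundle over $\mathbb{RP}^n$, to the pullback $\overline f^{\,*}\pi_{n-1}$.

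Double covers of a space $X$ are classified by $H^1(X;\z2)$, naturally in $X$. Let $w_m\in H^1(\mathbb{RP}^m;\z2)$ be the class of $\pi_m$; it generates $H^1(\mathbb{RP}^m;\z2)\cong\z2$. The previous paragraph then forces $\overline f^{\,*}(w_{n-1})=w_n$. Feeding this into the standard ring isomorphism $H^*(\mathbb{RP}^m;\z2)\cong\z2[w_m]/(w_m^{m+1})$, and applying the ring homomorphism $\overline f^{\,*}$ to the identity $w_{n-1}^{\,n}=0$ (valid in $H^n(\mathbb{RP}^{n-1};\z2)=0$ since $n>n-1$), we obtain $w_n^{\,n}=\overline f^{\,*}(w_{n-1})^{\,n}=\overline f^{\,*}\big(w_{n-1}^{\,n}\big)=0$ in $H^n(\mathbb{RP}^n;\z2)$. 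This contradicts the fact that $w_n^{\,n}$ is the nonzero generator of $H^n(\mathbb{RP}^n;\z2)\cong\z2$, completing the proof.

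The one substantive input — and the step I expect to be the real obstacle — is the determination of the mod-$2$ cohomology ring of $\mathbb{RP}^n$, concretely the cup-length statement $w_n^{\,n}\neq 0$; the remaining ingredients (forming quotients by a free action, the cartesian property of the square induced by an equivariant map, and the natural classification of double covers by first cohomology) are formal, and everything after establishing $\overline f^{\,*}(w_{n-1})=w_n$ reduces to a one-line algebraic manipulation. It is this passage from equivariant maps of spheres to a statement about a ring homomorphism that the C*-algebraic reformulations later in the paper are designed to imitate.
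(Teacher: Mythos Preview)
Your proof is correct and follows the standard cup-length argument, but it is a genuinely different route from what the paper does. The paper does not prove Theorem~\ref{classicalequi} from scratch; rather, it establishes only the \emph{equivalence} of this equivariant formulation with the classical Borsuk--Ulam theorem (Theorem~1.1), which is simply cited from~\cite{b-k33}. Concretely, the paper observes that a continuous $f\colon S^n\to\mathbb{R}^n$ with $f(p)\neq f(-p)$ for all $p$ yields an equivariant map to $S^{n-1}$ via $p\mapsto (f(p)-f(-p))/\|f(p)-f(-p)\|$, and conversely any equivariant $\widetilde f\colon S^n\to S^{n-1}$ composed with the inclusion $S^{n-1}\subset\mathbb{R}^n$ gives such an~$f$. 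Your approach, by contrast, supplies an autonomous proof: you pass to the quotient map $\overline f\colon\mathbb{RP}^n\to\mathbb{RP}^{n-1}$, identify the first Stiefel--Whitney classes under pullback, and derive a contradiction from the ring structure $H^*(\mathbb{RP}^m;\z2)\cong(\z2)[w_m]/(w_m^{m+1})$. This buys self-containment at the cost of invoking the mod-$2$ cohomology ring of real projective space (which, as you yourself flag, is the substantive input); the paper's reduction is lighter but outsources the actual content to Borsuk's original result. Your remark that the passage from equivariant maps to ring homomorphisms anticipates the C*-algebraic reformulations is apt, though the paper's later arguments proceed via $K$-theory and index pairings rather than cup products.
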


\subsection{Classical equivariant join construction}

\noindent
Let $I=[0,1]$ be the closed unit interval and let $X$ be a topological space. 
The \emph{unreduced suspension} $\Sigma X$
of $X$ is the quotient of $I\times X$ by the equivalence relation $R_S$ generated by
\begin{align}
&(0,x)\sim (0,x'),\qquad (1,x)\sim (1,x').
\end{align}
Now take another topological space $Y$ and, on the space $I\times X\times Y$, consider the 
 equivalence relation  $R_J$ given by
\begin{equation}
(0,x,y)\sim (0,x',y),\qquad (1,x,y)\sim (1,x,y').
\end{equation}
The quotient space 
$X * Y:=(I\times X\times Y)/R_J$ 
is called the \emph{join} of $X$ and $Y$. It resembles the unreduced suspension of $X\times Y$, 
but with only $X$ collapsed at 0, and only $Y$ 
collapsed at~1. 
\vspace*{-0mm}\begin{figure}[h]
\[
\includegraphics[width=105mm]{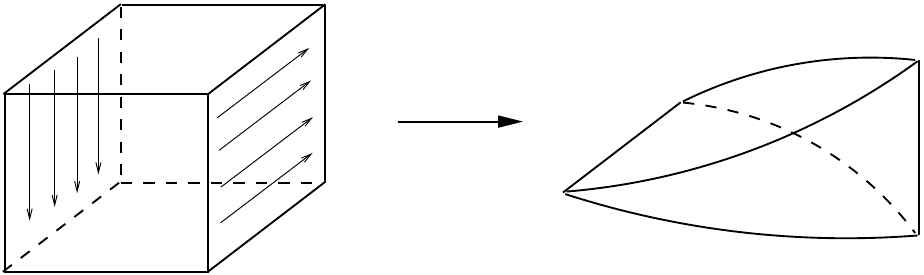}
\nonumber
\]
\end{figure}\vspace*{-0mm}
In particular, if $Y$ is a one-point space, the join $X*Y$ is  the {\em cone} $CX$ of~$X$.
If $Y$ is a two-point space with discrete topology, then the join $X*Y$ is the unreduced suspension $\Sigma X$ 
of~$X$.

If $G$ is a topological group acting  continuously on  $X$ and $Y$ from the right,
then the diagonal right \mbox{$G$-action} on $X\times Y$ induces a  continuous action on the join $X*Y$.
 Indeed,
the diagonal action of $G$ on $I\times X\times Y$ factorizes to 
the quotient, so that the formula
\[
([(t,x,y)],g)\longmapsto [(t,xg,yg)]
\]
makes $X*Y$ a right $G$-space. It is immediate that this continuous action is free 
if the $G$-actions on $X$ and $Y$ are free. 

If $Y=G$ with the right action assumed to be the group multiplication, 
we can construct the join $G$-space $X * Y$ in a  different manner: 
at 0 we collapse $X \times G$ to $G$ as before,
and at 1 we collapse $X \times G$  to $(X\times G)/R_D$ 
instead of~$X$. Here $R_D$ is the equivalence relation generated by
$(x,h)\sim (x',h')$, where $xh=x'h'$.
More precisely, let
 $R_J'$ be the  equivalence relation on $I \times X\times G$ generated by 
\[\label{5}
(0,x,h)\sim (0,x',h)\quad \text{and}\quad (1,x,h)\sim (1,x',h'), \; \text{where}\; xh=x'h'.
\]
The formula $[(t,x,h)]k:=[(t,x,hk)]$ defines a continuous right $G$-action on $(I \times X\times G)/R_J'$.
One can also easily check that the formula
\[\label{6}
X*G\ni [(t,x,h)] \longmapsto [(t,xh^{-1},h)] \in (I\times X\times G)/R_J'
\]
yields a $G$-equivariant homeomorphism. 

If we further specify also $X=G$ with the right action assumed to be the group multiplication,
  then the $G$-action on $X*Y=G * G$ is automatically free.  Furthermore, since the  action of $G$ on $X*G$ is 
free whenever it is free on $X$,
we conclude that the natural action on the iterated join of $G$ with itself is also free.
For instance, for $G=\z2$ we obtain a $\z2$-equivariant identification
$(\z2)^{*(n+1)}\cong S^{n}$,
where $S^{n}$ is the $n$-dimensional sphere with the antipodal action of $\z2$.

\subsection{Join of unital C*-algebras with free compact quantum group actions}

\begin{definition}[\cite{dhh}]\label{join}
Let $A_1$ and $A_2$  be unital C*-algebras. We call the unital C*-algebra
\begin{equation}
A_1 * A_2 := 
\left\{x\in C([0,1]) \underset{\mathrm{min}}{\otimes} A_1 \underset{\mathrm{min}}{\otimes} A_2 \,
\Big|\, (\mathrm{ev}_0\otimes \id)(x) \in  A_2,\,(\mathrm{ev}_1\otimes \id)(x) \in  A_1\right\}
\end{equation}
the \emph{join} C*-algebra of $A_1$ and~$A_2$. Here $\otimes_{\mathrm{min}}$ stands for the spatial 
(minimal) tensor product, and
 $\mathrm{ev}_r\colon C([0,1])\ni f\mapsto f(r)\in\mathbb{C}$
is the evaluation map at $r$ on the C*-algebra of continuous functions on the unit interval.
\end{definition}
\noindent
Note that, due to the fact that minimal tensor products preserve injections 
(e.g., see \cite[Proposition~4.22]{t-m79} or 
\cite[Section~1.3]{w-s94}), 
for any unital C*-algebras $A_1$ and $A_2$ the natural maps
\[
A_1\ni a\longmapsto a\ot 1\in A_1 \underset{\mathrm{min}}{\otimes} A_2\quad \text{and}
\quad  A_2\ni a\longmapsto 1\ot a\in A_1 \underset{\mathrm{min}}{\otimes} A_2
\]
are injective. Observe also that,
if $A_1:=C(X)$ and $A_2 := C(Y)$ are the C*-algebras of continuous functions on compact Hausdorff spaces 
$X$ and $Y$ respectively,
then 
\[
A_1 * A_2 = C(X * Y).
\]

Next, let $A$ be a unital $C^*$-algebra and
$\delta:A\rightarrow A\otimes_{\mathrm{min}}H$ an
injective  unital $*$-homomorphism. We call $\delta$ a \emph{coaction}  
of $H$ on $A$ (or an action of the compact quantum group $(H,\Delta)$ on $A$) iff
\begin{enumerate}
\item
$(\delta\otimes\mathrm{id})\circ\delta=
(\mathrm{id}\otimes\Delta)\circ\delta$
(coassociativity),
\item
$\{\delta(a)(1\otimes h)\;|\;a\in A,\,h\in H\}^{\mathrm{cls}}=
A\underset{\mathrm{min}}{\otimes}H$ (counitality).
\end{enumerate}
Here ``cls'' stands for ``closed linear span''. A coaction $\delta$ is called \emph{free} \cite{e-da00} iff
\[
\{(x\otimes 1)\delta(y)\;|\;x,y\in A\}^{\mathrm{cls}}=
A\underset{\mathrm{min}}{\otimes}H.
\] 

Since a diagonal coaction is not in general an algebra homomorphism,
to obtain an equivariant version of our noncommutative join construction, 
we need to modify  Definition~\ref{gjoin}  
in the spirit of \eqref{5}--\eqref{6}. 
\begin{definition}[cf.~\cite{dhh}]\label{gjoin}
Let $(H,\Delta)$ be a compact quantum group acting on a unital $C^*$-algebra~$A$
 via $\delta:A\rightarrow A\otimes_{\mathrm{min}}H$.
We call the unital $C^*$-algebra
\begin{equation}
A \underset{\delta}{*} H := 
\left\{f\in C\big([0,1],A \underset{\mathrm{min}}{\otimes} H\big) \,
\Big|\, f(0) \in \mathbb{C}\otimes H,\; f(1)\in\delta(A)\right\}
\end{equation}
the \emph{equivariant noncommutative join} of $A$ and $H$. 
\end{definition}
\begin{theorem}\mbox{\sc (\cite[Lemma~5.5 and Corollary~5.6]{bdh})}\ 
Let $(H,\Delta)$ be a compact quantum group, and $A$ be a unital C*-algebra equipped with a free coaction 
$\delta\colon A\to A \otimes_{\mathrm{min}}H$.\\
The $*$-homomorphism  
$$
\mathrm{id}\!\otimes\!\Delta\colon\; C([0,1],A)\! \underset{\mathrm{min}}{\otimes} \!H\;\longrightarrow\;
C([0,1],A)\! \underset{\mathrm{min}}{\otimes} \!H\! \underset{\mathrm{min}}{\otimes} \!H
$$ 
restricts to
\begin{equation}\label{deltadelta}
\delta_\Delta\colon\; A\underset{\delta}{*} H\;\longrightarrow\;
(A\underset{\delta}{*} H) \underset{\mathrm{min}}{\otimes} H.
\end{equation}
Moreover, the thus defined action of the compact quantum group
$(H,\Delta)$  on the join C*-algebra $A*_{\delta}H$ is  \emph{free}.
\end{theorem}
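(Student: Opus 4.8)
The plan is to prove the two assertions — that $\mathrm{id}\otimes\Delta$ restricts to $\delta_\Delta\colon A\underset{\delta}{*}H\to(A\underset{\delta}{*}H)\underset{\mathrm{min}}{\otimes}H$, and that the resulting coaction is free — after first replacing the abstract algebra $(A\underset{\delta}{*}H)\underset{\mathrm{min}}{\otimes}H$ by a concrete model inside a function algebra. All tensor products below are minimal; write $F:=A\otimes H$, so that $A\underset{\delta}{*}H$ is a $C^*$-subalgebra of $C([0,1],F)$. Evaluation at the two endpoints presents it as an extension
\[
0\longrightarrow C_0\big((0,1),F\big)\longrightarrow A\underset{\delta}{*}H\longrightarrow(\mathbb{C}\otimes H)\oplus\delta(A)\longrightarrow 0,\qquad f\longmapsto(f(0),f(1)),
\]
and I would observe that this extension is semisplit: the pointwise linear interpolation $s(v_0,v_1)(t):=(1-t)v_0+tv_1$ is a unital completely positive section. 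Since semisplit extensions remain exact under $-\otimes H$, combining this with $C_0((0,1),F)\otimes H=C_0((0,1),F\otimes H)$ and with the fact that $\delta\otimes\mathrm{id}_H$ is an isomorphism onto $\delta(A)\otimes H$ (because $\delta$ is injective) identifies $(A\underset{\delta}{*}H)\otimes H$ with $\big\{g\in C([0,1],A\otimes H\otimes H)\mid g(0)\in\mathbb{C}\otimes H\otimes H,\ g(1)\in\delta(A)\otimes H\big\}$ inside $C([0,1],A\otimes H\otimes H)=\big(C([0,1],A\otimes H)\big)\otimes H$. This identification is the step I expect to be the main obstacle: for a non-exact $H$ minimal tensoring does not commute with the relevant quotients, so the explicit completely positive section must genuinely be used rather than a formal pullback argument.

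With this model in hand, well-definedness of $\delta_\Delta$ is immediate. For $f\in A\underset{\delta}{*}H$ set $g(t):=(\mathrm{id}_A\otimes\Delta)(f(t))$. If $f(0)=1\otimes h_0$ then $g(0)=1\otimes\Delta(h_0)\in\mathbb{C}\otimes H\otimes H$, and if $f(1)=\delta(a)$ then coassociativity of $\delta$ gives $g(1)=(\mathrm{id}_A\otimes\Delta)\delta(a)=(\delta\otimes\mathrm{id}_H)\delta(a)\in\delta(A)\otimes H$; hence $g\in(A\underset{\delta}{*}H)\otimes H$. The coaction axioms for $\delta_\Delta$ are inherited from those for $\mathrm{id}\otimes\Delta$.

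For freeness, by the definition of a free coaction it suffices to prove that $V:=\mathrm{span}\{(x\otimes1)\,\delta_\Delta(y)\mid x,y\in A\underset{\delta}{*}H\}$ is dense in $(A\underset{\delta}{*}H)\otimes H$. As $V$ is stable under left multiplication by $(A\underset{\delta}{*}H)\otimes1$ and $A\underset{\delta}{*}H$ is unital, it is enough to put the constant sections $t\mapsto 1\otimes1\otimes h$ into $\overline V$ for every $h\in H$. The structural fact I would exploit is that $A\underset{\delta}{*}H$ is a $C([0,1])$-algebra — pointwise multiplication by $\varphi\in C([0,1])$ preserves both boundary conditions, using linearity of $\delta$ at $t=1$ — and that $\delta_\Delta$ is $C([0,1])$-linear; consequently $\varphi\cdot V\subseteq V$, so $\overline V$ is a closed $C([0,1])$-submodule of $C([0,1],A\otimes H\otimes H)$. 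For such submodules there is a local-to-global principle: a section $e$ lies in the submodule as soon as, for every $t$, the value $e(t)$ lies in the closure of the fibre of the submodule at $t$; the proof is a partition-of-unity patching on $[0,1]$ of locally valid approximants. So it remains to check the condition fibrewise.

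The fibre of $(A\underset{\delta}{*}H,\delta_\Delta)$ is $(A\otimes H,\mathrm{id}_A\otimes\Delta)$ for $t\in(0,1)$ (evaluation at such $t$ is surjective, by interpolation), $(H,\Delta)$ at $t=0$, and $(A,\delta)$ at $t=1$ (via $\delta(A)\cong A$ and coassociativity). Each of these coactions is free: $\{(g\otimes1)\Delta(g')\mid g,g'\in H\}^{\mathrm{cls}}=H\otimes H$ is one of the defining density conditions of the compact quantum group $(H,\Delta)$, so $\Delta$ is free; $\delta$ is free by hypothesis; and $\mathrm{id}_A\otimes\Delta$ is free because tensoring that density relation with $A$ on the left and using the unitality of $A$ yields $\{(a\otimes g\otimes1)(\mathrm{id}_A\otimes\Delta)(a'\otimes g')\}^{\mathrm{cls}}=A\otimes H\otimes H$. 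Evaluating $V$ in each fibre, its closure is therefore the entire fibre of $(A\underset{\delta}{*}H)\otimes H$ at $t$ — namely $A\otimes H\otimes H$, $1_A\otimes H\otimes H$, and $\delta(A)\otimes H$ in the three cases — each of which contains $1\otimes1\otimes h$ (at $t=1$ because $1\otimes1=\delta(1_A)\in\delta(A)$). The local-to-global principle then gives $t\mapsto 1\otimes1\otimes h\in\overline V$, whence $\overline V=(A\underset{\delta}{*}H)\otimes H$, i.e.\ $\delta_\Delta$ is free.
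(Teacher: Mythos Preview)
The paper does not contain its own proof of this theorem: the statement is quoted verbatim as \textsc{Lemma~5.5 and Corollary~5.6} of the cited reference~[bdh], and the surrounding text moves on immediately to the Peter--Weyl functor. So there is nothing in the present paper to compare your argument against.

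That said, your argument is sound and is in fact close in spirit to how such statements are proved in the literature. The semisplit-extension step identifying $(A\!*_\delta\!H)\otimes_{\min}H$ with the obvious subalgebra of $C([0,1],A\otimes H\otimes H)$ is correct and, as you note, is the only place where non-exactness of $H$ could cause trouble; your explicit ucp section via linear interpolation handles it. The fibrewise-freeness plus partition-of-unity argument is the standard mechanism for propagating Ellwood's condition through a $C([0,1])$-algebra structure, and your verification of freeness at the three types of fibre ($t\in(0,1)$, $t=0$, $t=1$) is accurate, including the use of coassociativity at $t=1$ to rewrite $(\mathrm{id}\otimes\Delta)\circ\delta$ as $(\delta\otimes\mathrm{id})\circ\delta$.

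One small point you pass over quickly: ``the coaction axioms for $\delta_\Delta$ are inherited'' sweeps the counitality condition $\{\delta_\Delta(f)(1\otimes h)\}^{\mathrm{cls}}=(A\!*_\delta\!H)\otimes H$ under the rug. Counitality does not restrict to arbitrary invariant $C^*$-subalgebras for free, but here it follows either by the same fibrewise partition-of-unity argument you use for freeness, or directly from freeness itself (take adjoints of the Ellwood relation to get $\{\delta_\Delta(y)(x\otimes 1)\}^{\mathrm{cls}}$ equal to the whole tensor product, then specialise). It would be worth one sentence to close that loop.
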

Given a compact quantum group $(H,\Delta)$, we denote by
 $\cO (H)$ its dense
Hopf $*$-subalgebra spanned by the matrix coefficients of
irreducible unitary corepresentations (see Worono\-wicz's Peter-Weyl theory of compact quantum
 groups~\cite{w-sl98}).
Moreover, denoting by $\otimes$  the purely algebraic tensor product over the field
	$\mathbb{C}$ of complex numbers, we define the
\emph{Peter-Weyl subalgebra} of $A$ \cite{bdh} as
\[
\mathcal{P}_H(A):=\{\,a\in A\,| \,\delta(a)\in A\otimes\cO (H)\,\}.
\]
Using the coassociativity of $\delta$, one can  check that $\mathcal{P}_H(A)$ is a right 
$\cO (H)$-comodule algebra. In particular, 
$\mathcal{P}_H(H)=\mathcal{O}(H)$.
The assignment $A\mapsto\mathcal{P}_H(A)$ is functorial with respect to
equivariant unital $*$-homomorphisms and comodule algebra maps. We call it the
\emph{Peter-Weyl functor}.

\subsection{Pullback structure of equivariant joins}
Let $(H,\Delta)$ be a compact quantum group, $\mathcal{O}(H)$ be its 
dense Hopf $*$-subalgebra
spanned by the matrix coefficients of irreducible unitary corepresentations.
Set 
\begin{eqnarray}
&& \mathcal{P}_1:=\left\{f \in C([0,\half],H) \otimes \mathcal{O}(H)\,|\,
	f(0) \in \mathbb{C} \otimes \mathcal{O}(H)
	\right\},\nonumber\\ 
&& \mathcal{P}_2:=\left\{f \in C([\half,1],H) \otimes \mathcal{O}(H)\,|\,
	f(1) \in \Delta(\mathcal{O}(H)) \right\},\nonumber\\
&& \mathcal{P}_{12}:= H \otimes\mathcal{O}(H).
\end{eqnarray} 
Here we identify elements of 
$C(I, H) \otimes \mathcal{O}(H)$ with functions 
$I \rightarrow  H \otimes \mathcal{O}(H)$. The just defined spaces
are $\mathcal{O}(H)$-comodule algebras with respect to the coaction
$\mathrm{id}\otimes \Delta$.
The coaction-invariant subalgebras $\mathcal{P}_{i}^{co\,\mathcal{O}(H)}$,  $i=1,2$,
 can be identified respectively with the ``left'' and ``right'' cone of~$H$:
\begin{eqnarray}
&& C_1H:=\left\{f \in C([0,\half], H)\,|\,f(0) \in \mathbb{C}\right\},
		  \nonumber\\ 
&& C_2H:=\left\{f \in C([\half,1], H)\,|\,f(1) \in \mathbb{C}\right\}.
		  \end{eqnarray}
We also see that $\mathcal{P}_{12}^{co\,\mathcal{O}(H)}= H$ and
$\mathcal{P}_1 \cong C_1H \otimes \mathcal{O}(H)$, 
$\mathcal{P}_2 \cong C_2H \# \mathcal{O}(H) $. Here $\mathcal{O}(H)$ acts on $C_2H$ via
the adjoint action
\[
(a \triangleright f)(t)=a_{(1)}f(t)S(a_{(2)}),\quad
a \in H,\quad f \in C_1H,\quad t \in [0,1].
\]

Now let $\pi^P_i\colon \mathcal{P}_i\to \mathcal{P}_{12}$, $i=1,2$, be the evaluation maps  at $\half$.
Then 
\[\label{P_is_fiber_p} 
\mathcal{P}_1 \underset{\mathcal{P}_{12}}{\times}  \mathcal{P}_2
:=\{ (p_1,p_2)\in \mathcal{P}_1 \times \mathcal{P}_2 ~|~ \pi^P_1(p_1)= \pi^P_2(p_2)\}
\]
is the pullback comodule algebra for the diagram
\begin{equation}\label{P_is_fiber_product} 
\mbox{$\xymatrix@=5mm{& & 
\mathcal{P}_1 \underset{\mathcal{P}_{12}}{\times} \mathcal{P}_2 
\ar[lld]_{\pr_1}
\ar[rrd]^{\pr_2}& &\\
\mathcal{P}_1 \ar[drr]_{\pi^P_1}& & & &\mathcal{P}_2\,.\ar[dll]^{\pi^P_2}\\
&&\mathcal{P}_{12}&&}$}
\end{equation}
Since the aforementioned evaluation maps are surjective, we conclude from \cite{hkmz11} that all four 
comodule algebras in this diagram are principal.
Furthermore, one can check \cite{ddhw}  that the Peter-Weyl comodule algebra of the join C*-algebra 
$H\!*_\Delta\! H$ coincides with the above pullback
comodule algebra:
\[
\mathcal{P}_H(H\!*_\Delta\! H)\cong 
\mathcal{P}_1 \underset{\mathcal{P}_{12}}{\times}  \mathcal{P}_2\ .
\]

Herein, the coaction-invariant subalgebras are C*-algebras. They assemble into the pullback diagram along the evaluation maps at $\half$:
\begin{equation}\label{B_is_fiber_product} 
\mbox{$\xymatrix@=5mm{& & \Sigma H
\ar[lld]_{\pr_1}
\ar[rrd]^{\pr_2}& &\\
C_1H \ar[drr]_{\pi_1}& & & &C_2H\,, \ar[dll]^{\pi_2}\\
&& H&&}$}
\end{equation}
where $\Sigma H$ is the unreduced suspension C*-algebra of~$H$.

We end this section by unravelling the Milnor construction \cite{m-j71} 
for the specific case of the above pullback of unital C*-algebras.
Given a finite-dimensional complex vector space $V$ and an isomorphism of left $H$-modules
$\chi:H\ot V\ra H\ot V$,
we construct the finitely generated projective left $\Sigma H$-module 
$M\left(C_1H\ot V ,\, C_2H\ot V,  \chi\right)$ 
(see~\cite{dhhmw12})  as the 
pullback of the free left $C_1H$-module $C_1H\ot V$ and the free left $C_2H$-module $C_2H\ot V$:
\begin{equation}\label{mdiag}
\xymatrix{
& M\left(C_1H\ot V ,\, C_2H\ot V,  \chi\right) 
\ar[dl]_{\mathrm{pr}_1} \ar[dr]^{\mathrm{pr}_2}& \\
C_1H\ot V \ar[d]_{\mathrm{ev}_{\frac{1}{2}}\ot\id}
&    &  C_2H\ot V \ar[d]^{\mathrm{ev}_{\frac{1}{2}}\ot\id} \\
H\ot V\ar[rr]_{\chi} & &H\ot V\,. }
\end{equation}

\section{Noncommutative Borsuk-Ulam framework}

\subsection{Borsuk-Ulam theorem for quantum spheres}

Applying the Gelfand transform, Theorem~\ref{classicalequi} translates to:
\[\label{classphere}
\text{\large\boldmath$\not\!\exists$}\;\;\text{$\z2$-equivariant $*$-homomorphism}\; 
C(S^n)\longrightarrow C(S^{n+1}) \, .
\]
Replacing the commutative C*-algebras of functions on spheres by noncommutative C*-algebras of 
$q$-deformed spheres,
we obtain a noncommutative version of the Borsuk-Ulam Theorem.

In particular, we can consider it for the case of the equatorial Podle\'s quantum two-sphere~\cite{p-p87}.
We tensor the C*-algebra $C(S^2_{q\infty})$ of the equatorial Podle\'s quantum sphere 
with the algebra of continuous 
functions on the unit circle, act on the tensor product with the diagonal antipodal 
$\mathbb{Z}/2\mathbb{Z}$-action, and consider the invariant subalgebra. This gives a 
$U(1)$-C*-algebra $A$ with the quantum real projective space C*-algebra $C(RP_q(2))$
(see~\cite{hms03}) as its 
$U(1)$-invariant part. Using the identity representation of $U(1)$, we associate with it a 
finitely generated projective module over $C(RP_q(2))$. 
One can prove that this module is not stably free~\cite{bhr}. 
This implies that $A$ cannot be 
a crossed product of $C(RP_q(2)$) and the integers. This proves the quantum 
Borsuk-Ulam Theorem for~$n=1$.
For $n=1$ and $q=1$, this is a proof of the weather-on-Earth case of the Borsuk-Ulam Theorem 
(see the introduction). 

For the arbitrary-dimension quantum spheres introduced in \cite{vs90,hs03}, 
the quantum Borsuk-Ulam Theorem
\[
\text{\large\boldmath$\not\!\exists$}\;\;\text{$C(\mathbb{Z}/2\mathbb{Z})$-equivariant 
$*$-homomorphism}\; C(S^n_q)\longrightarrow 
C(S^{n+1}_q)
\]
is proven in~\cite[Theorem 3]{y-m13}.
Let us emphasize that our
 noncommutative generalization of \eqref{classphere} is different from the one proved in~\cite{y-m13}.

\subsection{Noncommutative Borsuk-Ulam-type conjectures}

The Borsuk-Ulam Theorem is equivalent to:
\begin{theorem}[join formulation]
Let $n$ be a positive natural number.
There  does \emph{not} exist a $\mathbb{Z}/2\mathbb{Z}$-equivariant 
continuous  map $\widetilde{f}\colon S^{n-1}*\mathbb{Z}/2\mathbb{Z}\to S^{n-1}$. 
\end{theorem}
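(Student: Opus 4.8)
The plan is to deduce the statement from the equivariant formulation of the Borsuk--Ulam theorem (Theorem~\ref{classicalequi}) by producing a $\mathbb{Z}/2\mathbb{Z}$-equivariant homeomorphism $S^{n-1}*\mathbb{Z}/2\mathbb{Z}\cong S^n$. Here $\mathbb{Z}/2\mathbb{Z}$ is the two-point discrete group acting on itself by translation, the spheres $S^{n-1}$ and $S^n$ carry the antipodal action, and $S^{n-1}*\mathbb{Z}/2\mathbb{Z}$ carries the diagonal action on the join described in the classical equivariant join construction above (which is free since the antipodal action on $S^{n-1}$ is free for $n\ge 1$). Once this identification is established, any $\mathbb{Z}/2\mathbb{Z}$-equivariant continuous map $\widetilde{f}\colon S^{n-1}*\mathbb{Z}/2\mathbb{Z}\to S^{n-1}$ precomposed with the homeomorphism would yield a $\mathbb{Z}/2\mathbb{Z}$-equivariant continuous map $S^n\to S^{n-1}$, in direct contradiction with Theorem~\ref{classicalequi}; conversely, such a map $\widetilde f$ is obviously produced from an equivariant map $S^n\to S^{n-1}$, so the two non-existence statements are equivalent.

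To construct the homeomorphism I would use the observation, already recorded above, that the join of a space with a two-point discrete space is its unreduced suspension, so that on the level of underlying spaces $S^{n-1}*\mathbb{Z}/2\mathbb{Z}=\Sigma S^{n-1}\cong S^n$: each of the two cones $CS^{n-1}$ making up the join is a closed $n$-disk, and the two disks are glued along their common boundary sphere $S^{n-1}$ sitting at the parameter value $t=1$. Concretely, I would send the class of $(t,x,y)\in I\times S^{n-1}\times\{0,1\}$, with $x\in S^{n-1}\subseteq\mathbb{R}^n$, to the point $(\sin\theta\,x,\cos\theta)\in S^n\subseteq\mathbb{R}^n\times\mathbb{R}$, where $\theta:=t\pi/2$ if $y=0$ and $\theta:=\pi-t\pi/2$ if $y=1$. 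This assignment is constant in $x$ at $t=0$ (giving the two poles $(0,\dots,0,\pm1)$) and constant in $y$ at $t=1$ (giving the equatorial point $(x,0)$), hence factors through the relation $R_J$ defining the join; it is a continuous bijection from a compact space onto the Hausdorff space $S^n$, therefore a homeomorphism.

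It remains to check equivariance, which is the one place deserving care. The nontrivial element of $\mathbb{Z}/2\mathbb{Z}$ sends the class of $(t,x,y)$ to the class of $(t,-x,y')$ with $y'\ne y$, and since swapping $y$ replaces $\theta$ by $\pi-\theta$ in both cases, the image becomes $(\sin(\pi-\theta)(-x),\cos(\pi-\theta))=-(\sin\theta\,x,\cos\theta)$, i.e.\ the antipode of the original image. Thus the homeomorphism intertwines the diagonal join action with the antipodal action on $S^n$ (and in particular restricts to the antipodal action on the equatorial $S^{n-1}$), completing the reduction. I do not expect a genuine obstacle here: the construction is essentially the standard identification $\Sigma S^{n-1}\cong S^n$ made compatible with the involutions, and all the mathematical weight of the theorem sits in Theorem~\ref{classicalequi}, which we are entitled to invoke. (Alternatively, one may iterate the $\mathbb{Z}/2\mathbb{Z}$-equivariant identification $(\mathbb{Z}/2\mathbb{Z})^{*(n+1)}\cong S^n$ noted earlier, using $S^{n-1}\cong(\mathbb{Z}/2\mathbb{Z})^{*n}$, but the explicit coordinates above keep the argument self-contained.)
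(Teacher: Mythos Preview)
Your proposal is correct and follows exactly the route the paper implicitly uses: the paper does not spell out a proof but merely asserts the equivalence, relying on the earlier observations that the join of $X$ with a two-point space is $\Sigma X$ and that $(\mathbb{Z}/2\mathbb{Z})^{*(n+1)}\cong S^n$ equivariantly. Your explicit coordinate description of the equivariant homeomorphism $S^{n-1}*\mathbb{Z}/2\mathbb{Z}\cong S^n$ and the verification of equivariance simply fill in the details the paper omits.
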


This naturally leads to a classical Borsuk-Ulam-type conjecture:
\begin{conjecture}\label{mainclass}
Let $X$ be a compact Hausdorff space equipped with a continuous free action of a 
non-trivial compact Hausdorff
group~$G$. Then, for the diagonal action of $G$ on $X*G$, there does \emph{not} exist a $G$-equivariant
continuous map $f:X*G\to X$.
\end{conjecture}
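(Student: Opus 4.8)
The plan is to attack Conjecture~\ref{mainclass} by combining a structural reformulation of $X*G$ with the equivariant \emph{coindex}. The structural point is that, when $Y=G$ carries the translation action, the join $X*G$ is literally the double mapping cylinder of the diagram $G\xleftarrow{\,\mathrm{pr}_G\,}X\times G\xrightarrow{\,\mathrm{pr}_X\,}X$, in which $X\times G$ carries the diagonal action and both projections are $G$-equivariant: this is just the description of $(I\times X\times G)/R_J$ in which $X$ is collapsed at $t=0$ along $\mathrm{pr}_G$ and $G$ is collapsed at $t=1$ along $\mathrm{pr}_X$. Reading off the (strict) universal property of this colimit, a $G$-equivariant map $X*G\to X$ is precisely the datum of a $G$-equivariant self-map $\psi\colon X\to X$ together with a (non-equivariant) null-homotopy of $\psi$. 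Thus Conjecture~\ref{mainclass} is \emph{equivalent} to the statement that no $G$-equivariant continuous self-map of a compact free $G$-space is null-homotopic; for $X=S^{n-1}$ and $G=\z2$ this is Borsuk's theorem that an odd self-map of a sphere has nonzero (indeed odd) degree.

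In the tractable range I would prove this equivalent statement using $\mathrm{coind}_G(Z):=\sup\{k\mid\text{there is a $G$-equivariant map }G^{*k}\to Z\}$. Two elementary observations drive the argument. First, $G^{*(k+1)}=G^{*k}*G$ and the join of two $G$-equivariant maps is again $G$-equivariant for the diagonal actions, so joining a $G$-map $G^{*k}\to X$ with $\mathrm{id}_G$ produces a $G$-map $G^{*(k+1)}\to X*G$; hence $\mathrm{coind}_G(X*G)\ge\mathrm{coind}_G(X)+1$. Second, a hypothetical $G$-map $X*G\to X$ gives, by composition, $\mathrm{coind}_G(X*G)\le\mathrm{coind}_G(X)$. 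Together these force $\mathrm{coind}_G(X)+1\le\mathrm{coind}_G(X)$, which is impossible \emph{provided $\mathrm{coind}_G(X)$ is finite}. Finiteness is the whole content, and it holds, say, when $G$ is a nontrivial compact Lie group and $X$ is finite-dimensional: a $G$-map $G^{*k}\to X$ descends to a map $G^{*k}/G\to X/G$ commuting with the classifying maps to $BG$, and since $G^{*k}/G\to BG$ is $(k-1)$-connected (because $G^{*k}$ is $(k-2)$-connected) it induces an injection $H^j(BG;R)\hookrightarrow H^j(X/G;R)$ for $j\le k-1$ and any coefficient field $R$; choosing $R$ so that $H^*(BG;R)$ is nonzero in arbitrarily high degrees --- $R=\mathbb{Q}$ when $\dim G>0$, $R=\Z/p\Z$ with $p$ dividing $|G|$ when $G$ is finite --- while $H^j(X/G;R)=0$ for $j>\dim(X/G)<\infty$ bounds $k$.

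The main obstacle --- and the reason this is only a conjecture --- is the removal of those standing hypotheses. An arbitrary compact Hausdorff $X$ may be infinite-dimensional, in which case $\mathrm{coind}_G(X)$ can be infinite (as it would be for a compact contractible free $G$-space, if one exists) and the coindex argument collapses; one then needs a genuinely finer, ideal-valued invariant --- for instance the Fadell--Husseini index taken in \v{C}ech or sheaf-theoretic equivariant cohomology --- and an argument ruling out ``index-exotic'' free $G$-spaces, which is exactly the point at which the equivalent ``essential self-map'' reformulation would have to be proved in full generality. An arbitrary compact Hausdorff $G$ may be profinite or solenoidal: Milnor's join construction and the $(k-2)$-connectedness of $G^{*k}$ survive, but controlling $H^*(BG)$, and even selecting a good coefficient ring, becomes delicate. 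I would therefore expect a complete proof to proceed by first settling the compact-Lie, finite-dimensional case as above, then approximating a general compact group by its Lie quotients and a general compact space by an inverse system of finite-dimensional compacta, and finally pushing the index obstruction through these limits --- and it is this last passage, together with the exclusion of pathological limits, that I expect to be the genuine difficulty.
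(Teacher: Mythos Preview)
The statement you are addressing is Conjecture~\ref{mainclass}, which the paper explicitly leaves \emph{open}: it is stated as a conjecture, not a theorem, and in Section~3.1 the authors write that they ``do not know whether Conjecture~\ref{main} type~1 holds in general in the commutative case (see Conjecture~\ref{mainclass}).'' There is therefore no proof in the paper to compare against. What the paper does prove is only the special case $A=H=C(SU_q(2))$ of the quantum Conjecture~\ref{main} (where types~1 and~2 coincide), via an index-pairing computation showing a certain associated module is not free; this does not touch the classical Conjecture~\ref{mainclass}.

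Your proposal is thus not a reproduction of anything in the paper but a genuine partial attack on an open problem. The reformulation --- that a $G$-equivariant map $X*G\to X$ exists if and only if some $G$-equivariant self-map of $X$ is (non-equivariantly) null-homotopic --- is correct: writing $f([(t,x,g)])=H_t(xg^{-1})\cdot g$ for a null-homotopy $H$ of an equivariant $\psi$ gives one direction, and restricting an equivariant $f$ along $x\mapsto[(t,x,e)]$ gives the other. The coindex argument is also sound and delivers an unconditional proof whenever $\mathrm{coind}_G(X)<\infty$; your cohomological bound on the coindex, valid for non-trivial compact Lie $G$ acting freely on a finite-dimensional compact Hausdorff $X$, is the standard one and is correct as stated. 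You are equally right that the argument collapses when $\mathrm{coind}_G(X)=\infty$, and that this is exactly where the open content of the conjecture lives (a hypothetical compact contractible free $G$-space would be a counterexample). In short: you have not proved the conjecture --- nor has the paper --- but you have correctly isolated a substantial tractable range, proved it there, and accurately diagnosed the remaining obstruction.
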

\noindent
Particular cases of the above statement going beyond the Borsuk-Ulam Theorem 
have already been studied in \cite{s-e40,g-h37,g-h43}
(cf.~\cite{t-a12} for weaker results for non-free  
$\mathbb{Z}/p\mathbb{Z}$-actions and maps from $X$ to~$S^1$).

Thus we have arrived at the main point of this paper:
\begin{conjecture}\label{main}
Let $A$ be a unital C*-algebra with a free action of a non-trivial compact quantum group~$(H,\Delta)$. 
Also, let $A*_\delta H$ be the equivariant noncommutative join C*-algebra of $A$ and $H$ with the induced
free action of $(H,\Delta)$ given by $\delta_\Delta$. Then
\begin{align*}
&\text{\sc type 1}\qquad\boxed{\overset{\phantom\delta}{\phantom *}
\text{\large\boldmath$\not\!\exists$}\;\;\text{$H$-equivariant $*$-homomorphism}\; A\longrightarrow 
A\underset{\delta}{*}H\overset{\phantom\delta}{\phantom *}}\;,\\
&\text{\sc type 2}\qquad\boxed{\overset{\phantom\delta}{\phantom *}
\text{\large\boldmath$\not\!\exists$}\;\;\text{$H$-equivariant $*$-homomorphism}\; H\longrightarrow 
A\underset{\delta}{*}H\overset{\phantom\delta}{\phantom *}}\;.\vspace*{-5mm}\\
\end{align*}

\vspace*{-8mm}\noindent
Here the $H$-equivariance is defined with respect to coactions $\delta$ and $\delta_\Delta$ (type 1), and 
with respect to $\Delta$ and $\delta_\Delta$ (type 2).
\end{conjecture}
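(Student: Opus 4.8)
The plan is to attack both statements through a single mechanism: pass from the analytic category of C*-algebras to the algebraic category of comodule algebras via the Peter-Weyl functor $A\mapsto\mathcal{P}_H(A)$, and there convert the non-existence of an equivariant $*$-homomorphism into the non-triviality of a finitely generated projective module over a coaction-invariant base algebra. First I would use that $\mathcal{P}_H$ is functorial with respect to equivariant unital $*$-homomorphisms and comodule-algebra maps: any hypothetical equivariant $*$-homomorphism yields a comodule-algebra map between the corresponding Peter-Weyl algebras. For {\sc type 2} this gives a comodule-algebra map $\mathcal{O}(H)=\mathcal{P}_H(H)\to\mathcal{P}_H(A\underset{\delta}{*}H)$, and for {\sc type 1} a comodule-algebra map $\mathcal{P}_H(A)\to\mathcal{P}_H(A\underset{\delta}{*}H)$. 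Since $\delta_\Delta$ of \eqref{deltadelta} is free, the target is a principal comodule algebra (its principality transferring through the pullback \eqref{P_is_fiber_product} by \cite{hkmz11}), so these algebraic maps are exactly the objects my $K$-theoretic obstruction will act on.

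For {\sc type 2} I would exploit that a comodule-algebra map $\mathcal{O}(H)\to\mathcal{P}_H(A\underset{\delta}{*}H)$ out of the free comodule algebra is precisely a trivialization: under principality it splits the canonical map and exhibits a comodule-algebra isomorphism $\mathcal{P}_H(A\underset{\delta}{*}H)\cong\mathcal{P}_H(A\underset{\delta}{*}H)^{co\,\mathcal{O}(H)}\otimes\mathcal{O}(H)$. Thus {\sc type 2} is equivalent to the non-trivializability of this principal comodule algebra over its invariant subalgebra. To detect non-trivializability I would invoke the Milnor construction \eqref{mdiag}: taking $V$ to be the carrier of the fundamental unitary corepresentation of $H$ and $\chi$ the clutching isomorphism determined by $\Delta$, I form the finitely generated projective module $M\!\left(C_1H\ot V,\,C_2H\ot V,\chi\right)$ over the suspension-type base of \eqref{B_is_fiber_product}. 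A {\sc type 2} map, after applying $\mathcal{P}_H$, would trivialize this module, forcing its class in the $K_0$-group of the base to vanish; the strategy concludes by pairing that $K_0$-class with a suitable cyclic cocycle (equivalently, computing its Chern character) and showing the pairing is non-zero.

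For {\sc type 1} the obstruction is an index inequality rather than a single triviality. Here I would introduce a $K$-theoretic index for free coactions — the noncommutative analogue of the Fadell--Husseini/Yang cohomological index — and prove that the equivariant join construction strictly raises it. An equivariant $*$-homomorphism $g\colon A\to A\underset{\delta}{*}H$ is compatible with the endpoint evaluations, since $\delta^{-1}\!\circ\mathrm{ev}_1$ maps the join equivariantly onto $A$ while $\mathrm{ev}_0$ maps it onto $H$; applying $\mathcal{P}_H$ and passing to coaction-invariant subalgebras, $g$ induces a $*$-homomorphism of base algebras into the suspension-type base of \eqref{B_is_fiber_product} and hence a comparison of indices. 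The existence of $g$ would place this comparison in the direction opposite to the one forced by the strict raising of the index, giving a contradiction. This is the noncommutative counterpart of the classical fact that the index of $X*G$ exceeds that of $X$, which forbids any equivariant map $X*G\to X$.

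The step I expect to be the main obstacle — indeed the reason the statement is phrased as a conjecture rather than a theorem — is the last one in each branch: proving that the relevant $K_0$-class is non-zero, respectively that the index strictly rises, \emph{uniformly} for an arbitrary unital C*-algebra $A$ carrying an arbitrary free coaction of an arbitrary non-trivial $(H,\Delta)$. In the special case $A=C(SU_q(2))=H$ with $\delta=\Delta$, where {\sc type 1} and {\sc type 2} coincide, the non-vanishing is obtained by an explicit Fredholm-module computation identifying the class with Pflaum's quantum instanton and showing that its index pairing is non-trivial. In general no single computation applies: one would need either a functorial, computable invariant of free coactions that detects the suspension clutching for every $H$, or a structural argument ruling out a multiplicative total integral $\mathcal{O}(H)\to\mathcal{P}_H(A\underset{\delta}{*}H)$ for any non-trivial $H$. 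Producing such a uniform invariant is the crux, and it is exactly here that the present methods stop short of the full conjecture.
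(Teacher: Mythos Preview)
The statement is a \emph{conjecture}; the paper does not prove it in general, only the special case $A=H=C(SU_q(2))$ (Section~3.2), where {\sc type~1} and {\sc type~2} coincide. Your proposal recognizes this and, for that case, follows essentially the paper's route: apply the Peter--Weyl functor, note that an equivariant map $H\to A*_\delta H$ makes the Peter--Weyl comodule algebra a smash product so that every associated module is free (this is exactly Proposition~\ref{prop}), identify the relevant associated module with the Milnor pullback module of~\eqref{mdiag} (Theorem~\ref{mainthm}), and then exhibit non-freeness via an index pairing. Your acknowledgment that the uniform non-vanishing for arbitrary $(A,H)$ is the missing ingredient is accurate and matches the paper's stance.

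Two points where your write-up overreaches. First, in the {\sc type~2} paragraph you invoke the pullback diagrams \eqref{P_is_fiber_product}--\eqref{B_is_fiber_product} and the module $M(C_1H\otimes V,\,C_2H\otimes V,\,\chi)$ for \emph{general} $A$, but those diagrams and the base $\Sigma H$ are specific to $A=H$; for general $A$ the coaction-invariant subalgebra of $A*_\delta H$ is not $\Sigma H$, and the clutching picture changes accordingly. The paper's classical reduction ``$X*G$ trivializable $\Rightarrow$ $G*G$ trivializable'' uses an equivariant map $G\to X$ coming from a basepoint, which has no obvious noncommutative analogue, so you cannot simply import the $\Sigma H$-based obstruction to general~$A$. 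Second, your {\sc type~1} strategy posits a noncommutative Fadell--Husseini-type index that strictly increases under the equivariant join; this is a plausible programme but is not in the paper and is, as you concede, the heart of the open problem. In short: your plan matches the paper where the paper has results, and is honestly speculative where the paper is too.
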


\section{Noncontractibility of compact quantum groups}

\noindent
In this section, we consider the special case $(A,\delta)=(H,\Delta)$ when
the two types of Conjecture~\ref{main} coincide. In the classical setting,
the thus restricted Conjecture~\ref{main} boils down to the non-contractibility of non-trivial compact
Hausdorff groups, which is well known~\cite{h-b79}.

\subsection{General setting}

If $X$ is a compact Hausdorff principal $G$-bundle, $A=C(X)$ and $H=C(G)$,  then 
Conjecture~\ref{main} type 2 states that the principal $G$-bundle $X*G$ is not
trivializable unless $G$ is trivial. This is clearly true because otherwise $G*G$ would be trivializable, which
is tantamount to $G$ being contractible, and the only compact Hausdorff contractible group is trivial. 
 On the other hand, we do not know whether Conjecture~\ref{main} type 1 holds in general
in the commutative case (see Conjecture~\ref{mainclass}).

\begin{definition}\label{triv}
Let $A$ be a unital C*-algebra equipped with a free action of a compact quantum group $(H,\Delta)$
implemented by $\delta\colon A\to A\otimes_\mathrm{min}H$, and
let $B$ denote the fixed-point subalgebra for this action. We call such a triple $(P, B, H)$ a \emph{compact
quantum principal bundle}. We say that
a compact quantum principal bundle $(P, B, H)$ is {\em trivializable} iff
there exists an $H$-equivariant $*$-homomorphism $j:H\to P$,
i.e.\ a $*$-homomorphism $j$ such that $(j\ot\id)\circ\Delta=\delta\circ j$.
\end{definition}

\begin{proposition}\label{prop}
Let $(H,\Delta)$ be a compact quantum group, $\mathcal{O}(H)$ be its 
dense Hopf $*$-subalgebra
spanned by the matrix coefficients of irreducible unitary corepresentations.
If $(P, B, H)$ is a trivializable compact quantum principal bundle
then, for any finite-dimensional corepresentation $\varrho : V \to \mathcal{O}(H)\otimes V$,
the associated left $B$-module $\mathcal{P}_H(P) \Box_{\mathcal{O}(H)} V$ is free. 
\end{proposition}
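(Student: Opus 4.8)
The plan is to construct an explicit isomorphism of left $B$-modules $B\otimes V\cong\mathcal{P}_H(P)\,\Box_{\mathcal{O}(H)}\,V$; since $V$ is finite dimensional this identifies the associated module with $B^{\oplus\dim_{\mathbb{C}}V}$, which is free. Throughout I use that $B$ is the subalgebra of $\mathcal{O}(H)$-coinvariants of $\mathcal{P}_H(P)$, and I write $\delta$ also for the restricted right coaction on $\mathcal{P}_H(P)$, $\varrho$ for the corepresentation, and recall (Sweedler notation, sums suppressed) that $\mathcal{P}_H(P)\,\Box_{\mathcal{O}(H)}\,V$ is the subspace of $\mathcal{P}_H(P)\otimes V$ on which $\delta\otimes\id$ and $\id\otimes\varrho$ agree, a left $B$-module via left multiplication on the first tensorand.

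First I would pass the trivialization to the algebraic level. By hypothesis there is an $H$-equivariant $*$-homomorphism $j\colon H\to P$, that is $(j\otimes\id)\circ\Delta=\delta\circ j$. Since the Peter--Weyl functor is functorial with respect to equivariant unital $*$-homomorphisms and $\mathcal{P}_H(H)=\mathcal{O}(H)$, the map $j$ restricts to a morphism of $\mathcal{O}(H)$-comodule algebras $\mathcal{O}(H)\to\mathcal{P}_H(P)$, still denoted $j$. The crucial remark is that, being a unital algebra homomorphism, $j$ is convolution-invertible with convolution inverse $j\circ S$: indeed $j(h\sw1)\,j(S(h\sw2))=j\big(h\sw1\,S(h\sw2)\big)=\varepsilon(h)\,1$, and symmetrically $j(S(h\sw1))\,j(h\sw2)=\varepsilon(h)\,1$. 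Thus $B\subseteq\mathcal{P}_H(P)$ is a cleft $\mathcal{O}(H)$-extension, trivialized by $j$.

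Next I would write down the candidate isomorphism $\Phi\colon B\otimes V\to\mathcal{P}_H(P)\,\Box_{\mathcal{O}(H)}\,V$ given by $\Phi(b\otimes v)=b\,j(v\sw{-1})\otimes v\sw0$, and its candidate inverse $\Psi$, defined on a cotensor element $p\otimes v$ by $\Psi(p\otimes v)=p\sw0\,j\big(S(p\sw1)\big)\otimes v$. That $\Phi$ indeed takes values in the cotensor product follows from the colinearity $\delta\circ j=(j\otimes\id)\circ\Delta$ of $j$ together with the coassociativity of $\varrho$, and $\Phi$ is visibly left $B$-linear. That $\Psi$ takes values in $B\otimes V$ is the step that requires care: using the cotensor relation one first rewrites $\Psi(p\otimes v)$ in the form $p\,j\big(S(v\sw{-1})\big)\otimes v\sw0$, and then one computes $\delta$ of the first tensorand, invoking colinearity of $j$, the anti-coalgebra identity $\Delta\circ S=(S\otimes S)\circ\Delta^{\mathrm{op}}$, and the antipode and counit axioms, to conclude that $p\sw0\,j(S(p\sw1))$ is coinvariant. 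Finally $\Phi\circ\Psi=\id$ and $\Psi\circ\Phi=\id$ follow by expanding the coactions, using that $j$ is a \emph{unital algebra} map, and applying $h\sw1S(h\sw2)=\varepsilon(h)1=S(h\sw1)h\sw2$ together with the counit law $\varepsilon(v\sw{-1})v\sw0=v$; for $\Phi\circ\Psi$ one invokes the cotensor relation once more to move the $V$-coaction back onto $\mathcal{P}_H(P)$. This yields $\mathcal{P}_H(P)\,\Box_{\mathcal{O}(H)}\,V\cong B\otimes V$ as left $B$-modules, hence freeness.

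The only genuinely delicate point I anticipate is the coinvariance of $p\sw0\,j(S(p\sw1))$, i.e.\ the well-definedness of $\Psi$: this is where all of the Hopf-algebraic data (colinearity of $j$, the anti-coalgebra property of $S$, and the antipode and counit axioms) are used simultaneously and where the Sweedler bookkeeping has to be carried out with some attention; everything else is formal manipulation. Should one prefer to avoid this computation, it can be replaced by an appeal to the structure theory of cleft Hopf--Galois extensions: because $j$ is an algebra map the associated $2$-cocycle is trivial, so $\mathcal{P}_H(P)\cong B\otimes\mathcal{O}(H)$ as a left $B$-module and right $\mathcal{O}(H)$-comodule, and then, $\mathbb{C}$ being a field, $\mathcal{P}_H(P)\,\Box_{\mathcal{O}(H)}\,V\cong B\otimes\big(\mathcal{O}(H)\,\Box_{\mathcal{O}(H)}\,V\big)\cong B\otimes V$.
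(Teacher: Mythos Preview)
Your argument is correct and follows essentially the same route as the paper. The paper's proof is a terse three-line version of your final paragraph: from the colinearity of $j$ it observes $j(\mathcal{O}(H))\subseteq\mathcal{P}_H(P)$, declares that $\mathcal{P}_H(P)$ is therefore a smash-product comodule algebra, and concludes freeness of the associated module without further comment. Your explicit construction of $\Phi$ and $\Psi$ simply unpacks what that invocation means, and your closing remark about the cocycle being trivial is exactly the paper's implicit reasoning.
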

\begin{proof}
Due to the $H$-colinearity of $j$, for any $h\in H$, we obtain
$
j(h_{(1)})\otimes h_{(2)} = \delta ( j(h)).
$
This shows that 
$
j(\mathcal{O}(H)) \subseteq \mathcal{P}_H(P).
$
Therefore, $\mathcal{P}_H(P)$ is a smash-product comodule algebra.
Hence the associated left $B$-module 
$ \mathcal{P}_H(P) \Box_{\mathcal{O}(H)} V$ is free.
\end{proof}

\begin{theorem}\label{mainthm}
Let $(H,\Delta)$ be a compact quantum group, $\mathcal{O}(H)$ be its 
dense Hopf $*$-subalgebra
spanned by the matrix coefficients of irreducible unitary corepresentations,
and \mbox{$V\stackrel{\varrho}{\to} \mathcal{O}(H)\otimes V$} be
 a finite-dimensional corepresentation of $\mathcal{O}(H)$.
Next, let $\mathcal{P}_H(H\!*_\Delta\! H)$ be the Peter-Weyl comodule algebra of
the equivariant noncommutative join $H\!*_\Delta\! H$, let
$\Sigma H$ be the unreduced suspension C*-algebra of~$H$
(the coaction-invariant subalgebra $\mathcal{P}_H(H\!*_\Delta\! H)^{co\,\mathcal{O}(H)}$), 
and let $C_1H$ and $C_2H$ be
respectively the left and right cones of~$H$.
Then 
\[\label{mod}
\mathcal{P}_H(H\underset{\Delta}{*} H) \underset{\mathcal{O}(H)}{\Box} V\cong
M\big(C_1H\otimes V\,,\,C_2H\otimes V\,,\,(m\otimes\id)\circ(\id\otimes(S\otimes\id)\circ\varrho)\big)
\]
as left modules over~$\Sigma H$. Here $m$ and $S$ are respectively the multiplication and the antipode
of the Hopf algebra~$\mathcal{O}(H)$, and the right-hand-side module is defined as the pullback
module of~\eqref{mdiag}.
\end{theorem}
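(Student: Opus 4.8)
The plan is to compute both sides explicitly as pullback modules over $\Sigma H$ and exhibit an isomorphism between them by tracking the gluing data. First I would recall from the earlier discussion that $\mathcal{P}_H(H*_\Delta H)\cong \mathcal{P}_1\times_{\mathcal{P}_{12}}\mathcal{P}_2$ as $\mathcal{O}(H)$-comodule algebras, with $\mathcal{P}_1\cong C_1H\otimes\mathcal{O}(H)$, $\mathcal{P}_2\cong C_2H\#\mathcal{O}(H)$ (smash product via the adjoint action), and $\mathcal{P}_{12}=H\otimes\mathcal{O}(H)$, the maps $\pi_i^P$ being evaluation at $\frac12$. Cotensoring a pullback of comodule algebras with $V$ over $\mathcal{O}(H)$ commutes with the pullback (because $\Box_{\mathcal{O}(H)}V$ is left exact and the $\pi_i^P$ are surjective, so the relevant sequences stay exact), hence
\[
\mathcal{P}_H(H\underset{\Delta}{*}H)\underset{\mathcal{O}(H)}{\Box}V\;\cong\;
\big(\mathcal{P}_1\underset{\mathcal{O}(H)}{\Box}V\big)\;\underset{\mathcal{P}_{12}\Box_{\mathcal{O}(H)}V}{\times}\;\big(\mathcal{P}_2\underset{\mathcal{O}(H)}{\Box}V\big)
\]
as left $\Sigma H$-modules.

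Next I would identify each vertex of this pullback. Since $\mathcal{O}(H)\,\Box_{\mathcal{O}(H)}V\cong V$ canonically, the trivial-comodule factor $C_1H$ gives $\mathcal{P}_1\Box_{\mathcal{O}(H)}V\cong C_1H\otimes V$, the free left $C_1H$-module; similarly $\mathcal{P}_{12}\Box_{\mathcal{O}(H)}V\cong H\otimes V$. For the smash product $\mathcal{P}_2\cong C_2H\#\mathcal{O}(H)$ one uses the standard fact that for a smash product $B\#\mathcal{O}(H)$ the cotensor $(B\#\mathcal{O}(H))\Box_{\mathcal{O}(H)}V$ is the free module $B\otimes V$ — this is exactly the mechanism behind Proposition~\ref{prop} — so $\mathcal{P}_2\Box_{\mathcal{O}(H)}V\cong C_2H\otimes V$, free over $C_2H$. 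Thus the pullback on the right-hand side is a pullback of the free modules $C_1H\otimes V$ and $C_2H\otimes V$ over $H\otimes V$, which is precisely the shape of $M(C_1H\otimes V,\,C_2H\otimes V,\,\chi)$ in diagram~\eqref{mdiag}; what remains is to compute the clutching isomorphism $\chi\colon H\otimes V\to H\otimes V$.

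The heart of the argument, and the step I expect to be the main obstacle, is pinning down $\chi$ precisely. The left leg $\pi_1^P$ is evaluation at $\frac12$ of $\mathcal{P}_1\cong C_1H\otimes\mathcal{O}(H)$ followed by the identification $\mathcal{P}_{12}=H\otimes\mathcal{O}(H)$; after $\Box_{\mathcal{O}(H)}V$ this is just $\mathrm{ev}_{1/2}\otimes\id$ on $C_1H\otimes V\to H\otimes V$, matching the left leg of~\eqref{mdiag}. The right leg $\pi_2^P$ is evaluation at $\frac12$ on $\mathcal{P}_2$, but the isomorphism $\mathcal{P}_2\cong C_2H\#\mathcal{O}(H)$ is implemented by the $\mathcal{O}(H)$-equivariant twist coming from $\Delta(\mathcal{O}(H))$ at the endpoint $t=1$ — concretely the map sending $f(t)\mapsto f(t)_{(1)}\otimes f(t)_{(2)}$-type reshuffling using $\Delta$ and $S$. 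Carrying this twist through the cotensor with $V$ and through the canonical iso $\mathcal{O}(H)\Box_{\mathcal{O}(H)}V\cong V$ produces on $H\otimes V$ exactly the composite $(m\otimes\id)\circ(\id\otimes(S\otimes\id)\circ\varrho)$: the corepresentation $\varrho$ supplies $v\mapsto v_{(-1)}\otimes v_{(0)}$, the antipode $S$ inverts the group-like twist introduced by the adjoint action, and the multiplication $m$ of $\mathcal{O}(H)$ absorbs it into the $H$-factor. I would verify this by a direct diagram chase on generators, being careful with the ordering conventions in the smash product and with which tensor leg the coaction $\mathrm{id}\otimes\Delta$ acts on; once $\chi$ is identified with the stated formula, the universal property of the pullback module gives the claimed left $\Sigma H$-module isomorphism, and one checks it is $\Sigma H$-linear because all the identifications above are maps of $(\Sigma H)$-modules by construction (the coaction-invariant parts assemble into the pullback~\eqref{B_is_fiber_product}).
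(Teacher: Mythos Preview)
Your proposal is correct and follows essentially the same route as the paper: decompose the Peter--Weyl comodule algebra as a pullback, cotensor piecewise to get free modules over the cones, and then identify the clutching isomorphism over $H$. The only difference is in presentation: where you argue the clutching via the abstract smash-product trivialization of $\mathcal{P}_2$ and leave the final identification of $\chi$ as a diagram chase, the paper writes down the two trivializations explicitly as $j_1\colon h\mapsto 1\otimes h$ and $j_2\colon h\mapsto (t\mapsto h_{(1)})\otimes h_{(2)}$, packages them into module isomorphisms $\Lambda_i=(J_i\otimes\id)\circ(\id\otimes\varrho)$, and then verifies directly on simple tensors $c_i\otimes v_i$ that the two gluing conditions at $t=\tfrac12$ are equivalent (one being obtained from the other by applying $(m\otimes\id)\circ(\id\otimes\varrho)$ twice).
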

\begin{proof}
Consider $C_iH$-module isomorphisms $\Lambda_i$, $i=1,2$, given by
\begin{gather}
\Lambda_i \colon C_iH\otimes V
\stackrel{\id\otimes\varrho}{\longrightarrow}
C_iH\otimes \mathcal{O}(H)\underset{\mathcal{O}(H)}{\Box}V
\stackrel{J_i\otimes\id}{\longrightarrow}
\mathcal{P}_i(H\underset{\Delta}{*} H)\underset{\mathcal{O}(H)}{\Box} V,\\
J_i\colon C_iH\otimes \mathcal{O}(H)\ni c_i\otimes h\longmapsto c_ij_i(h)\in
 \mathcal{P}_i(H\underset{\Delta}{*} H),
\nonumber\\
j_1\colon \mathcal{O}(H)\ni  h\longmapsto 1\otimes h\in\mathcal{P}_1(H\underset{\Delta}{*} H),
\nonumber\\
j_2\colon \mathcal{O}(H)\ni  h\longmapsto 
(t\mapsto h_{(1)})\otimes h_{(2)}\in\mathcal{P}_2(H\underset{\Delta}{*} H).
\nonumber
\end{gather}
Since the modules of \eqref{mod} have the same pullback structure (see Section~1.3), they are isomorphic
if the following equivalence holds:
\begin{gather}
\left((m\otimes\id)\circ(\id\otimes(S\otimes\id)\circ\varrho)\circ
(\mathrm{ev}_{\frac{1}{2}}\otimes\id)\right)(b_1)
=\left(\mathrm{ev}_{\frac{1}{2}}\otimes\id\right)(b_2)
\label{X1}
\\
\big\Updownarrow\nonumber\\
\left((\mathrm{ev}_{\frac{1}{2}}\otimes\id)\circ\Lambda_1 \right)(b_1)
=
\left((\mathrm{ev}_{\frac{1}{2}}\otimes\id)\circ\Lambda_2 \right)(b_2).
\label{X2}
\end{gather}
On simple tensors $b_i=c_i\ot v_i$, the formula \eqref{X1} boils down to
\[\label{X11}
c_1(\half) S( v_{1(-1)})\ot v_{1(0)}
=c_2(\half) \ot v_2\, ,
\]
and the formula \eqref{X2} becomes
\[\label{X12}
c_1(\half)  v_{1(-1)}\ot v_{1(0)}
=c_2(\half) v_{2(-2)}  v_{2(-1)} \ot v_{1(0)}.
\]
Applying twice the isomorphism $(m\ot\id )\circ (\id\ot\varrho)$ (its inverse is
$(m\ot\id )\circ (\id\ot(S\ot\id)\circ\varrho)$
to both sides of
\eqref{X11} yields \eqref{X12}.
\end{proof}

\subsection{Quantum instanton bundle}

Assume now that
$H$ is the C*-algebra $C(SU_q(2))$ of the quantum group $SU_q(2)$. 
Then the equivariant noncommutative join $C(SU_q(2))*_\Delta C(SU_q(2))$ is Pflaum's quantum instanton
fibration~\cite{p-mj94,hkmz11}. 
Let $\alpha$ and $\beta$ 
be the usual generators of~$C(SU_q(2))$.
Also, let  
\begin{equation}
\label{K1gen}
U:= \left[ 
\begin{array}{cc}
\alpha & -q \beta^\ast \cr
\beta & \alpha^\ast 
\end{array}
\right] \in U_2 (C(SU_q(2))) \subset M_{2}(C(SU_q(2))), 
\end{equation}
and let $V=\C^2$ be a left $\mathcal{O}(H)$-comodule via
\[\label{rho}
\varrho_U: \C^2\lra \mathcal{O}(H)\ot \C^2,\qquad \varrho_U(e_i) := \sum_{j=1}^2U_{ij}\otimes e_j\,,
\] 
where $\{e_i\}_i$ is the standard basis of~$\mathbb{C}^2$.
(This corepresentation of $C(SU_q(2))$
is usually called the \emph{fundamental representation}
of~$SU_q(2)$.) It follows from \eqref{mod} that the left $\Sigma C(SU_q(2))$-module
\[
E:=\mathcal{P}_{C(SU_q(2))}\Big(C(SU_q(2))\underset{\Delta}{*} C(SU_q(2))\Big) 
\underset{\mathcal{O}(C(SU_q(2)))}{\Box}\mathbb{C}^2
\]
is isomorphic to the pullback module \eqref{mdiag} for
$\chi= (m\otimes\id)\circ(\id\otimes(S\otimes\id)\circ\varrho_U)$.
Hence, by \cite[Theorem~2.1]{dhhmw12}, an idempotent $p$ representing the $K_0$-class of $E$
can be computed via \cite[(2.7)]{dhhmw12} using the unitary matrix $a=U^*$.
Moreover, as explained in
\cite[p.~77]{dhhmw12}, there exists an even Fredholm module whose index pairing with $E$ 
is equal to the index pairing of $U^*$ with an appropriate odd Fredholm module. 
As the latter equals $-1\neq0$, we infer that the module $E$ is not free. Therefore, Proposition~\ref{prop}
implies that  Pflaum's quantum instanton
fibration is not trivializable, i.e.~Conjecture~\ref{main} holds. As explained at the beginning of Section~3.1,
this means that $SU_q(2)$ is \emph{not} contractible.

\section*{Acknowledgements}
\noindent
This work was partially supported by NCN grant 2011/01/B/ST1/06474. The authors are very grateful to the
Hausdorff Research Institute for Mathematics in Bonn, where key progress on the paper was made, for its
fabulous hospitality and support. It is a pleasure to thank Kenny De Commer and Makoto Yamashita for their
help and lengthy discussions.
Paul F.\ Baum and Ludwik D\k{a}browski   were  also partially supported 
by  NSF grant DMS 0701184 and PRIN 2010-11 grant ``Operator Algebras,
 Noncommutative Geometry and Applications", respectively.


\begin{thebibliography}{99}
\normalsize

\bibitem{bdh}
P.F.~Baum, K.~De~Commer, P.M.~Hajac: 
\emph{Free actions of compact quantum group on unital C*-algebras}, 
arXiv:1304.2812.\vspace*{1.25mm}

\bibitem{bhr}
P.F.~Baum, P.M.~Hajac, J.~Rudnik: 
\emph{The K-theory of quantum real projective planes}, in preparation.\vspace*{1.25mm}

\bibitem{b-k33} K.~Borsuk:
\emph{Drei S\"atze \"uber die n-dimensionale euklidische Sph\"are},
Fund. Math. 35 (1933) 177--190.\vspace*{1.25mm}

\bibitem{ddhw}
L.~D\k{a}browski, K.~De~Commer, P.M.~Hajac, E.~Wagner:
\emph{Principal coactions on unreduced nuclear couple suspensions of unital C*-algebras}, 
in preparation.\vspace*{1.25mm}

\bibitem{dhh}
L.~D\k{a}browski, T.~Hadfield, P.M.~Hajac:
\emph{Noncommutative join constructions},
 arXiv:1407.6020.\vspace*{1.25mm}

\bibitem{dhhmw12}
L.~D\k{a}browski, T.~Hadfield, P.M.~Hajac, R.~Matthes, E.~Wagner:
\emph{Index pairings for pullbacks of C*-algebras}, in
\emph{Operator algebras and quantum groups}, 67--84, 
Banach Center Publ., 98, Polish Acad. Sci. Inst. Math., Warsaw, 2012.\vspace*{1.25mm}

\bibitem{s-e40} S.~Eilenberg:
 On a theorem of P. A. Smith concerning fixed points for periodic transformations. 
Duke Math. J. 6 (1940) 428--437.\vspace*{1.25mm}

\bibitem{e-da00}
D.A.~Ellwood:
\emph{A new characterisation of principal actions},
J. Funct. Anal. 173 no.~1 (2000) 49--60.\vspace*{1.25mm}

\bibitem{hkmz11}
P.M.~Hajac, U.~Kr\"ahmer, R.~Matthes, B.~Zieli\'nski:
\emph{Piecewise principal comodule algebras},
J. Noncommut. Geom. 5 (2011) 591--614.\vspace*{1.25mm} 

\bibitem{hms03} P.M.~Hajac, R.~Matthes, W~Szyma\'nski:
\emph{Quantum real projective space, disc and spheres},
Algebras and Representation Theory 6 (2003) 169--192.\vspace*{1.25mm}

\bibitem{g-h37} G.~Hirsch: 
\emph{Une generalisation d'un theoreme de M. Borsuk concernant certaines transformations de l'analysis situs}, 
Acad. Roy. Belgique Bull. Cl. Sci. 23 (1937) 219--225.\vspace*{1.25mm} 

\bibitem{g-h43} G.~Hirsch:
\emph{Sur des proprietes de representations permutables et des generalisations d'un theoreme de Borsuk}, 
Ann. Sci. Ecole Norm. Sup. 60, 3 (1943) 113--142.\vspace*{1.25mm} 

\bibitem{h-b79} B.~Hoffmann:
\emph{A compact contractible topological group is trivial},
Archiv Math. 32, no.~1 (1979) 585--587.\vspace*{1.25mm}

\bibitem{hs03} J.H.~Hong, W.~Szyma\'nski:
\emph{Quantum lens spaces and graph algebras},
Pacific Journal of Mathematics, 211, no.~2 (2003).\vspace*{1.25mm}

\bibitem{mj03} J.~Matou$\check{s}$ek:  
\emph{Using the Borsuk-Ulam Theorem},
Springer-Verlag Berlin Hielberg 2003.\vspace*{1.25mm}

\bibitem{m-j71} J.~Milnor:
\emph{Introduction to Algebraic $K$-Theory}, Annals of Mathematics Studies 72, 
Princeton University Press, Princeton, New Jersey, 1971.\vspace*{1.25mm}

\bibitem{p-mj94} M.~Pflaum:
\emph{Quantum groups on fibre bundles},
Comm. Math. Phys. 166 (1994) 279--315.\vspace*{1.25mm}

\bibitem{p-p87} P.~Podle\'s:
\emph{Quantum spheres}, Lett. Math. Phys. 14 (1987) 521--531.\vspace*{1.25mm}

\bibitem{t-m79} M.~Takesaki: 
\emph{Theory of operator algebras. I},
Springer-Verlag, New York-Heidelberg, 1979.\vspace*{1.25mm}

\bibitem{t-a12} A.~Taghavi: 
\emph{A Banach algebraic Approach to the Borsuk-Ulam Theorem}, 
Abstract and applied analysis (2012), Article ID 729745.\vspace*{1.25mm}

\bibitem{vs90}
L.L.~Vaksman, Ya.S.~Soibelman:
\emph{Algebra of functions on the quantum group $SU(n+1)$, and odd-dimensional quantum spheres}, 
Algebra i Analiz 2 no.~5 (1990) 101--120. 
Translation in \emph{Leningrad Math. J.} 2 no.~5 (1991) 1023--1042.\vspace*{1.25mm}

\bibitem{w-s94} S.~Wassermann:
\emph{Exact C*-algebras and related topics},
 Lecture Notes Series, 19. Seoul National University, Research Institute of Mathematics, 
Global Analysis Research Center, Seoul, 1994.\vspace*{1.25mm}

\bibitem{w-sl98} 
S.L.~Woronowicz:
\emph{Compact quantum groups},
in \emph{Sym\'etries quantiques (Les Houches, 1995)}, 
845--884, North-Holland, Amsterdam, 1998.\vspace*{1.25mm} 

\bibitem{y-m13} 
M.~Yamashita: 
\emph{Equivariant comparison of quantum homogeneous spaces}, 
Comm. Math. Phys. 317 (2013) 593--614.

\end{thebibliography}
\end{document}